\newtheorem{proposition}{Proposition}[section]
\algrenewcommand\textproc{\texttt}
\newcommand{\bbC}{\mathbb{C}}
\newcommand{\bbZ}{\mathbb{Z}}
\newcommand{\bmb}{\bm{b}}
\newcommand{\bmu}{\bm{u}}
\newcommand{\bmx}{\bm{x}}
\newcommand{\calI}{\mathcal{I}}
\newcommand{\calN}{\mathcal{N}}
\newcommand{\calW}{\mathcal{W}}
\newcommand{\calO}{\mathcal{O}}
\newcommand{\rme}{\mathrm{e}}
\newcommand{\rmi}{\mathrm{i}}
\newcommand{\sfH}{\mathsf{H}}
\newcommand{\lmdr}{\lambda_{\mathrm{right}}}
\newcommand{\sgm}{\sigma}
\newcommand{\eps}{\epsilon}
\newcommand{\tldA}{\tilde{A}}
\newcommand{\tldX}{\tilde{X}}
\newcommand{\hmin}{h_{\mathrm{min}}}
\newcommand{\dd}{\,\mathrm{d}}
\newcommand{\norm}[1]{\left\|{#1}\right\|}
\renewcommand{\Re}{\mathrm{Re}}
\renewcommand{\Im}{\mathrm{Im}}
\title{Computing the matrix exponential with the double exponential formula}
\author{%
  Fuminori Tatsuoka\thanks{Department of Applied Physics, Graduate School of Engineering, Nagoya University, Furo-cho, Chikusa-ku, Nagoya 464-8603, Japan (\texttt{\{f-tatsuoka,sogabe,kemmochi,zhang\}@na.nuap.nagoya-u.ac.jp}).}
  \and Tomohiro Sogabe\footnotemark[1]
  \and Tomoya Kemmochi\footnotemark[1]
  \and Shao-Liang Zhang\footnotemark[1]
}
\begin{document}

\maketitle
\begin{abstract}
  This paper considers the computation of the matrix exponential $\rme^A$ with numerical quadrature.
  Although several quadrature-based algorithms have been proposed, they focus on (near) Hermitian matrices.
  In order to deal with non-Hermitian matrices, we use another integral representation including an oscillatory term and consider applying the double exponential (DE) formula specialized to Fourier integrals.
  The DE formula transforms the given integral into another integral whose interval is infinite, and therefore it is necessary to truncate the infinite interval.
  In this paper, to utilize the DE formula, we analyze the truncation error and propose two algorithms.
  The first one approximates $\rme^A$ with the fixed mesh size which is a parameter in the DE formula affecting the accuracy.
  Second one computes $\rme^A$ based on the first one with automatic selection of the mesh size depending on the given error tolerance.
\end{abstract}

\section{Introduction}
The exponential of a matrix $A\in\bbC^{n\times n}$ is defined as
\begin{align}
  \rme^A := I + A + \frac{1}{2!}A^2 + \frac{1}{3!}A^3 + \cdots,
\end{align}
and it arises in several situations in scientific computing.
One of the applications is the exponential integrator, a class of numerical solvers for stiff ordinary differential equations, see, e.g., \cite{hochbruck2010exponential}.
In recent years, $\rme^A$ also arises in the analysis of directed graphs \cite{delacruzcabrera2019analysis}.
Thus, several classes of computational methods of $\rme^A$ and $\rme^A\bmb ~ (\bmb\in\bbC^n)$ have been proposed.
For example,
methods based on Pad\'{e} approximation of $\rme^z$ at $z=0$ \cite{al-mohy2011computing,fasi2019arbitrary,al-mohy2009new},
methods based on projections onto Krylov-like subspaces \cite{saad1992analysis,gockler2014uniform},
methods based on the best approximation of $\rme^z$ on $(-\infty,0]$ \cite{schmelzer2007evaluating},
and methods based on numerical quadrature \cite{weideman2007parabolic,dingfelder2015improved}.
In general, quadrature-based methods have two advantages:
these algorithms can compute $\rme^A\bmb$ without computing $\rme^A$ itself when $A$ is large and either sparse or structured, and it is possible to easily make these algorithms parallel in the sense that the integrand can be computed independently on each abscissa.
See \cite[Sect. 18]{trefethen2014exponentially} for more details.

These quadrature-based algorithms\cite{weideman2007parabolic,dingfelder2015improved} focus on (nearly) Hermitian matrices.
In detail, they are proposed for the efficient computation of Bromwich integrals whose integrand has singularities on or near the negative real axis.
For the computation of $\rme^A$, (all) the eigenvalues of $A$ are the singularities of the integrand.
Hence, when $A$ has eigenvalues with large imaginary parts, the singularities are also located far from the negative real axis, and these algorithms would be inaccurate or may not converge.

The motivation of this study is to construct quadrature-based algorithms that can be used for non-Hermitian matrices.
Here, we consider another integral representation
\begin{align}\label{eq:ir_rme}
  \rme^A = \frac{2}{\pi} \int_0^\infty x \sin(x) (x^2I + A^2)^{-1} \dd{x}.
\end{align}
The integral representation \eqref{eq:ir_rme} holds when all the eigenvalues of $A$ lie in the open left half plane $\{z\in\bbC\colon \Re(z) < 0\}$, but this condition can be assumed without loss of generalities because $\rme^{A + sI} = \rme^s \rme^A\ (s\in\bbC)$.

Since the integrand in \eqref{eq:ir_rme} includes an oscillatory term $\sin(x)$ and the interval is the half infinite, it is difficult to compute \eqref{eq:ir_rme} by using typical quadrature formula such as Gaussian quadrature.
To compute \eqref{eq:ir_rme}, we consider applying the double exponential (DE) formula specialized to Fourier integrals \cite{ooura1999robust}.

In this paper, we propose algorithms using the DE formula.
The DE formula transforms a given integral into another integral suited for the trapezoidal rule.
Because the transformed integral interval is infinite, we need to truncate the infinite sum of the discretized integral appropriately into a finite one.
Thus, we propose a truncation method of the infinite sum based on error analysis.
In addition, we show an automatic quadrature algorithm that selects the mesh size of the trapezoidal rule depending on the given tolerance.

The organization of this paper is as follows:
The DE formula used in this paper is introduced in Section \ref{sect:introduction_de}.
In Section \ref{sect:algorithm}, we analyze the truncation error and propose algorithms.
Numerical results are presented in Section \ref{sect:experiment}, and we conclude this paper in Section \ref{sect:conclusion}.

\section{The DE formula for Fourier integrals} \label{sect:introduction_de}
The DE formula exploits the fact that the trapezoidal rule for the integrals of analytic functions on the real line converges exponentially, see, e.g., \cite{takahasi1974double}.
Several types of change of variables are proposed to deal with different types of integrals.
In \cite{ooura1991double}, integrals form of
\begin{align}\label{eq:int_demo}
  \calI = \int_0^\infty g(x) \sin(x) \dd{x},
\end{align}
are considered, where $g$ be a scalar function decaying polynomially as $x \to \infty$.

The DE formula for \eqref{eq:int_demo} is as follows.
We first select the mesh size $h > 0$ to be used for the trapezoidal rule.
Next, we apply a change of variables $x = x_h(t)$ such that $x_h'(t)$ decays double exponentially as $t \to -\infty$ and $x_h(t) - \pi t/h$ decays double exponentially as $t \to \infty$.
Then, we compute the transformed integral with the trapezoidal rule:
\begin{align}
    \calI = \int_{-\infty}^\infty x_h'(t) g(x_h(t)) \sin(x_h(t)) \dd{t} \approx h \sum_{k=-\infty}^\infty x_h'(kh) g(x_h(kh)) \sin(x_h(kh)).
\end{align}
The summand decays double exponentially as $|k| \to \infty$, and therefore the infinite sum can be approximated by a sum of not too many numbers of the summand.
Examples of such change of variables are
\begin{align}\label{eq:trans_91}
  x_h(t) = \frac{\pi}{h} \frac{t}{1 - \exp(-\alpha \sinh t)}
  \qquad (\alpha = 6)
\end{align}
which is proposed in \cite{ooura1991double},
\begin{align}\label{eq:trans_99}
  &x_h(t) = \frac{\pi}{h} \frac{t}{1 - \exp(-2t - \alpha(1-\rme^{-t}) - \beta(\rme^t-1))}\\
  &\quad \left(\beta = \frac{1}{4}, \quad \alpha = \frac{\beta}{\sqrt{1 + \log(1+\pi/h)/4h}}\right),
\end{align}
which is proposed in \cite{ooura1999robust}.
The implementation notes for \eqref{eq:trans_99} is presented in Appendix \ref{sect:derivative_cv}.

The DE formula can be applied to \eqref{eq:ir_rme} because the integrand is a rational function of $A$ in the sense of \cite[Def.\ 1.2]{higham2008functions}.
See \cite[Sect.\ 1.1]{tatsuoka2021computing} for more details of the discussion.

\section{Computing \texorpdfstring{$\rme^A$}{the matrix exponential} with the DE formula} \label{sect:algorithm}
In this section, we propose algorithms for $\rme^A$ based on the DE formula:
\begin{align}\label{eq:ir_rme_de}
  \rme^A = \int_{-\infty}^\infty F_h(t,A) \dd{t} \approx \sum_{k=l}^r h F_h(kh,A)
\end{align}
where
\begin{align}
  F_h(t,A) := x_h'(t) \sin(x_h(t)) G(x_h(t,A)),
  \qquad G(x,A) := \frac{2}{\pi} x (x^2I + A^2)^{-1}.
\end{align}
We may not write the second argument of $F_h$ and $G$ when it is obvious from the context.

The error of the DE formula can be divided into the discretization error and the truncation error:
\begin{align}
  &\int_{-\infty}^\infty F_h(t,A) \dd{t} - \sum_{k=l}^r h F_h(kh,A)\\
  &\quad = \left(\int_{-\infty}^\infty F_h(t) \dd{t} - \sum_{k=-\infty}^\infty h F_h(kh)\right)
    + \left(\sum_{k=-\infty}^\infty h F_h(kh) - \sum_{k=l}^r h F_h(kh)\right).
\end{align}
In Subsection \ref{sect:truncation_error} we analyze the truncation error for the given interval, and in Subsection \ref{sect:propose_algorithm} we show the two algorithms.

\subsection{Truncation error} \label{sect:truncation_error}
We show an upper bound on the truncation error as follows:
\begin{proposition} \label{thm:trunc_err}
    Suppose that all the eigenvalues of $A\in\bbC^{n\times n}$ lie in the left half plane.
    Let $u(t) = hx_h(t)/\pi - 1$, where $x_h(t)$ is a double exponential transformation for Fourier integrals satisfying $x_h'(t) \le 2\pi/h$.
    For given $l, r \in \bbZ$ with $l < r$ and $x_h(lh) \le 1/\sqrt{2\|A^{-2}\|}$,
    we have
  \begin{align} \label{eq:ub_trunc_err}
      & \norm{\sum_{k=-\infty}^\infty hF_h(kh) - \sum_{k=l}^r hF_h(kh)}\\
      & \quad \le \frac{h}{\pi} \sum_{k=-\infty}^{l-1} x_h'(kh) \\
      & \qquad + 2 \pi\sum_{k=r+1}^\infty ku(kh)\left(\norm{(A+\rmi x_h(kh)I)^{-1}} + \norm{(A-\rmi x_h(kh)I)^{-1}}\right),
  \end{align}
  where $\|\cdot\|$ is any subordinate norm.
\end{proposition}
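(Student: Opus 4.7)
The plan is to split the truncation error into its left tail $\sum_{k<l} h F_h(kh)$ and right tail $\sum_{k>r} h F_h(kh)$ via the triangle inequality, then bound each pointwise in $k$ using estimates on $|\sin(x_h(kh))|\cdot\|G(x_h(kh),A)\|$ appropriate to the regime in question.

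For the left tail, I would use the hypothesis $x_h(lh) \le 1/\sqrt{2\|A^{-2}\|}$, which together with the monotonicity of the DE change of variables gives $x_h(kh)^2\|A^{-2}\| \le 1/2$ for every $k \le l$. Factoring $x^2 I + A^2 = A^2(I + x^2 A^{-2})$ and applying a Neumann-series bound uniformly controls $\|(x_h(kh)^2 I + A^2)^{-1}\|$ by a multiple of $\|A^{-2}\|$. Combining this with $|\sin(x_h(kh))| \le x_h(kh)$ and using the hypothesis to cancel $x_h(kh)^2\|A^{-2}\|$ against a pure constant yields a pointwise bound of the form $\|F_h(kh)\| \le \tfrac{1}{\pi}\,x_h'(kh)$; summing over $k \le l-1$ then produces the first sum in \eqref{eq:ub_trunc_err}.

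For the right tail, where $x_h(kh)$ is large, I would first recast the matrix factor of $G$ via the partial fraction identity $\tfrac{2x}{x^2+A^2} = \rmi\bigl[(A+\rmi x I)^{-1} - (A-\rmi x I)^{-1}\bigr]$, yielding $\|G(x,A)\| \le \tfrac{1}{\pi}(\|(A+\rmi x I)^{-1}\| + \|(A-\rmi x I)^{-1}\|)$ and producing exactly the resolvent sum appearing in the bound. For the scalar factor, the DE transformation is engineered so that $x_h(kh)$ lies very near $\pi k$ for large positive $k$; interpreting $u(kh)$ as the (double-exponentially small) relative deviation of $x_h(kh)$ from $\pi k$ and invoking the identity $\sin(x_h(kh)) = (-1)^k\sin(x_h(kh) - \pi k)$ together with $|\sin y| \le |y|$ gives $|\sin(x_h(kh))| \le \pi k\,u(kh)$. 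The hypothesis $x_h'(kh) \le 2\pi/h$ then absorbs the derivative factor, producing the per-term bound $2\pi k\,u(kh)\bigl(\|(A+\rmi x_h(kh)I)^{-1}\| + \|(A-\rmi x_h(kh)I)^{-1}\|\bigr)$.

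The main obstacle is the sine estimate in the right tail: one must unwind the definition of $u(kh)$ in terms of the concrete transformations \eqref{eq:trans_91} and \eqref{eq:trans_99} so that $\pi k\,u(kh)$ genuinely bounds $|x_h(kh) - \pi k|$, after which the sine estimate follows from the elementary inequality. For these explicit transformations this reduces to a direct algebraic check. The left-tail estimate is essentially a Neumann-series calculation once the smallness of $x_h(lh)^2\|A^{-2}\|$ is in hand, and the final assembly is a straightforward application of the triangle inequality to the partitioned infinite sum.
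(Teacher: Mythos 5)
Your proposal matches the paper's proof essentially step for step: the triangle-inequality split into left and right tails, the Neumann-series bound on $\|x_h(kh)^2(x_h(kh)^2I+A^2)^{-1}\|$ combined with $|\sin x|\le x$ for the left tail, and the partial-fraction resolvent identity together with $|\sin(x_h(kh))|=|\sin(k\pi u(kh))|\le k\pi u(kh)$ and $x_h'(kh)\le 2\pi/h$ for the right tail. The only difference is cosmetic — the paper reads the sine estimate directly off $x_h(kh)=k\pi(1+u(kh))$ rather than checking the explicit transformations as you suggest — so your argument is correct and takes the same route.
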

\begin{proof}
  From the triangle inequality, it holds that
  \begin{align}
    \norm{\sum_{k=-\infty}^\infty hF_h(kh) - \sum_{k=l}^r hF_h(kh)}
    \le \norm{\sum_{k=-\infty}^{l-1} hF_h(kh)} + \norm{\sum_{k=r+1}^\infty hF_h(kh)}.
  \end{align}
  In this proof, we show \eqref{eq:ub_trunc_err} by showing
  \begin{align}
    &\norm{\sum_{k=-\infty}^{l-1} h F_h(kh)} \le \frac{h}{\pi} \sum_{k=-\infty}^{l-1} x_h'(kh), \label{eq:ub_trunc_err_l} \\
    &\norm{\sum_{k=r+1}^\infty h F_h(kh)}\\
    & \quad \le 2\pi \sum_{k=r+1}^\infty ku(kh)\left(\norm{(A+\rmi x_h(kh)I)^{-1}} + \norm{(A-\rmi x_h(kh)I)^{-1}}\right). \label{eq:ub_trunc_err_r}
  \end{align}

  We first show \eqref{eq:ub_trunc_err_l}.
  The assumption $(0 < ) x_h(lh)\le 1/\sqrt{2\|A^{-2}\|}$ gives that for $k \le l$, $\|x_h(kh)^2A^{-2}\|\le 1/2\ (< 1)$.
  Hence, by using the Neumann expansion, we have
  \begin{align}
    &\norm{x_h(kh)^2(x_h(kh)^2I + A^2)^{-1}}
      = \norm{x_h(kh)^2A^{-2} (x_h(kh)^2A^{-2} + I)^{-1}}\\
    & \quad =\norm{\sum_{m=0}^\infty \left[x_h(kh)^2A^{-2}\right]^{m+2}}
    \le \sum_{m=2}^\infty \norm{x_h(kh)^2A^{-2}}^m \le \frac{1}{2}.
  \end{align}
  Therefore, we have
  \begin{align}
    & \norm{\sum_{k=-\infty}^{l-1} h F_h(kh)}\\
    & \quad = \frac{2h}{\pi} \norm{
        \sum_{k=-\infty}^{l-1} \frac{x_h'(kh) \sin(x_h(kh))}{x_h(kh)} x_h(kh)^2 (x_h(kh)^2I+A^2)^{-1}
    }\\
    & \quad \le \frac{h}{\pi} \sum_{k=-\infty}^{l-1} x_h'(kh) \frac{\sin(x_h(kh))}{x_h(kh)}
    \le \frac{h}{\pi} \sum_{k=-\infty}^{l-1} x_h'(kh). \label{eq:tmp001}
  \end{align}

  Next, we show \eqref{eq:ub_trunc_err_r}.
  It is easily seen that
  \begin{align}
    & \norm{\sum_{k=r+1}^\infty h F_h(kh)}\\
    & \quad  \le \frac{h}{\pi} \sum_{k=r+1}^\infty x_h'(kh) \sin(x_h(kh)) \norm{2x_h(kh)(x_h(kh)^2I + A^2)^{-1}}\\
    & \quad = \frac{h}{\pi} \sum_{k=r+1}^\infty x_h'(kh) \sin(x_h(kh)) \left[
        \norm{(A + \rmi x_h(kh)I)^{-1}} + \norm{(A - \rmi x_h(kh)I)^{-1}}
    \right].
  \end{align}
  Here, because $x_h'(kh) \le 2\pi/h$ for $k \ge r$ and
  \begin{align}
    |\sin(x_h(kh))| = |\sin(k\pi(1+u(kh)))| = |\sin(k\pi u(kh))| \le k\pi u(kh),
  \end{align}
  we have \eqref{eq:ub_trunc_err_r}, and \eqref{eq:ub_trunc_err} is proved.
\end{proof}

When the numerical range of $A$ is the subset of the left half plane, the upper bound on the 2-norm of the resolvent $\|(A \pm \rmi x I)^{-1}\|_2$ can be obtained.
Let $\mu$ be the supremum of the real part of the numerical range, i.e., $\mu = \sup_{z \in \calW(A)} \Re(z)$, where $\calW(A) = \{\bmx^\sfH A\bmx \colon \bmx \in\bbC^n,\ \|\bmx\|_2 = 1\}$.
If $\mu < 0$, by using the results in \cite{crouzeix2017numerical}, we have
\begin{align}
  \max_{x \ge 0} \|(A \pm \rmi xI)^{-1}\|_2
  \le \max_{x \ge 0} \left[
  (1+\sqrt{2}) \max_{z\in \calW(A)} \left|\frac{1}{z \pm\rmi x}\right|
  \right] = \frac{1+\sqrt{2}}{|\mu|}.
\end{align}
It may not be easy to compute $\mu$ in a practical situation, and we would use the real part of the right-most eigenvalue as an alternative of $\mu$.

\subsection{Algorithms} \label{sect:propose_algorithm}
In this subsection, we propose two algorithms.
The first one computes $\rme^A$ by using the DE formula with the given mesh size $h$ and the second one computes $\rme^A$ with automatic mesh size selection.

In the first algorithm, we first shift the input matrix $A$ so that all eigenvalues of the shifted matrix lie in the left half plain.
Then, the algorithm selects the truncation point of the discretized integral and calculates the exponential of the shifted matrix.
Finally, we get $\rme^A$ by scaling the exponential of the shifted matrix exponential.
The details of the algorithm are given in Algorithm \ref{alg:1}.

\begin{algorithm}
  \caption{Computing $\rme^A$ with the DE formula with given mesh size}
  \begin{algorithmic}[1]
    \Statex \textbf{Input} $A\in\bbC^{n\times n}$, mesh size $h>0$, tolerance for the truncation error $\eps>0$, shift parameter $\sgm < 0$
    \State Compute the rightmost eigenvalue of $A$, $\lmdr$
    \State $\tldA = A + (\sgm - \lmdr)I$
    \Comment The rightmost eigenvalue of $\tldA$ is $\sgm$.
    \State $l, r = \texttt{GetInterval}(\sigma,\eps,h)$
    \State $\tldX = h\sum_{k=l}^r F_h(kh,\tldA)$
    \Statex \textbf{Output} $X = \rme^{\lmdr - \sgm}\tldX \approx \rme^A$
    \Statex
    \Function{GetInterval}{$\sigma$, $\epsilon, h$}
      \Comment Compute an interval whose truncation error will be smaller than $\eps$
      \State Find the maximum $l\in\bbZ$ satisfying $h (\sum_{k=-\infty}^{l-1} x'(kh))/ \pi \le \eps/2$
      \State Find the minimum $r\in\bbZ$ satisfying $4\pi(1+\sqrt{2})(\sum_{k=r+1}^\infty ku(kh)) / |\sgm| \le \eps/2$
      \State \Return $l, r$
    \EndFunction
  \end{algorithmic}
  \label{alg:1}
\end{algorithm}

Algorithm \ref{alg:1} requires the shift parameter $\sgm$, and this parameter must be chosen appropriately.
Indeed, when $\sgm$ is positive, the integral representation \eqref{eq:ir_rme} does not make sense.
In addition, when $\sgm$ is large in the negative direction, it makes inaccurate results because the condition number for $\rme^A$ becomes large as $\|A\|$ is large, see, e.g., \cite[Sect.\ 1]{guttel2016scaled}.
From numerical examples in Subsection \ref{sect:ex_shift}, it may be better to choose $\sgm$ from $[-5, 0)$.

The computation of the resolvent $(x_h(kh)^2I + A^2)^{-1}$ is necessary in Step 4 of Algorithm \ref{alg:1}.
The condition number for the resolvent can be as large as the square of the condition number of $A$, and it may result in low accuracy.
In this case, although the computational cost increases, we can compute as
\begin{align}
  (x_h(kh)^2I + A^2)^{-1} = \frac{\rmi}{2x_h(kh)}\left[(\rmi x_h(kh)I+A)^{-1} - (-\rmi x_h(kh)I+A)^{-1}\right]
\end{align}
for the accuracy.

Infinite sums in steps 6 and 7 of Algorithm \ref{alg:1} can be approximated with not too large computations because the summands $x_h'(kh)$ and $ku(kh)$ decays double exponentially.
For example, Figure \ref{fig:summand} illustrates the value of $|x_h'(kh)|$ and $|ku(kh)|$ for $h=0.05$, and it shows that these sums can be estimated within the 50 computations of the summands respectively.

\begin{figure}[htbp]
  \centering
  \includegraphics{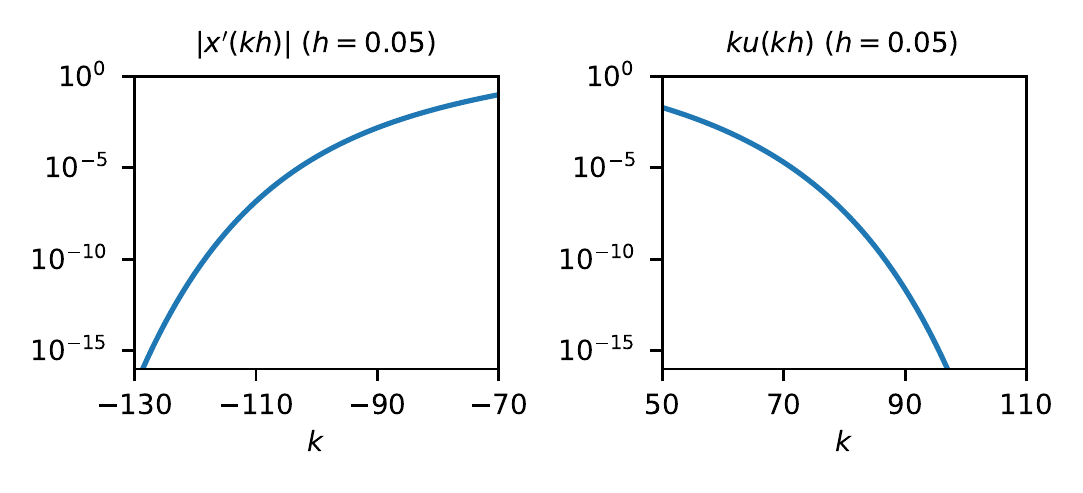}
  \caption{The absolute value of the summands in the upper bound on the truncation error. The left figure illustrates $|x'_h(kh)|$ in \eqref{eq:ub_trunc_err}, and the right one illustrates $|ku(kh)|$.}
  \label{fig:summand}
\end{figure}

We next propose an automatic quadrature algorithm.
Ooura and Mori proposed an automatic quadrature algorithm in \cite[Sect.\ 5]{ooura1999robust} that is applicable when the convergence rate of the DE formula, the constant $\rho>0$ such that $|\calI - h\sum_{k=-\infty}^\infty x_h'(kh) g(x_h(kh))\sin(x_h(kh))| = \calO(\exp(-\rho h))$, is known.

In the computation of $\rme^A$, the error $\|\rme^A - h\sum_{k=-\infty}^\infty F_h(kh)\|$ can be bounded by $\gamma\exp(-\rho/h)$ with some positive constants $\gamma$ and $\rho$ because any element of $F_h(t)$ can be represented as a rational function of $t$.
However, the convergence rate $\rho$ varies with the eigenvalue distribution of $A$ because the singularities of the integrand in \eqref{eq:ir_rme_de} are solutions $z$ of $x_h(z)^2 = \lambda^2$, where $\lambda$ is any eigenvalue of $A$.
Thus, we add an estimation of the convergence rate to the algorithm.
Once we estimate $\gamma$ and $\rho$, we can select an appropriate mesh size $h$ by solving $\eps = \gamma\exp(-\rho/h)$ for $h$ with a given tolerance $\eps$.

In the algorithm, we first select three mesh sizes $h_1 > h_2 > h_3 > 0$.
Let $X_i$ $(i=1,2,3)$ be the computational result of the DE formula \eqref{eq:ir_rme_de} with the mesh size $h_i$.
If $X_3$ is much more accurate than $X_1$ and $X_2$, the errors of $X_1$ and $X_2$ can be approximated as
\begin{align}
  \|X_i - \rme^A\| \approx \eps_i := \|X_i - X_3\| \qquad (i=1,2).
\end{align}
By solving $\eps_i = \gamma\exp(-\rho/h_i)$ $(i=1,2)$ for $\rho$ and $\gamma$, we get
\begin{align}
  \rho = \frac{h_1h_2}{h_1 - h_2} \log\left(\frac{\eps_1}{\eps_2}\right),
  \qquad
  \gamma = \eps_1\exp\left(\frac{\rho}{h_1}\right),
\end{align}
and we would approximate $\|X_3 -\rme^A\| \approx \eps_3 := \gamma\exp(-\rho/h_3)$.
Because $\eps_3$ is just an approximation, we would select a safety parameter $\eta > 0$ and stop the algorithm if $\eps_3 \le \eta\eps$ to obtain $X_3(\approx \rme^A)$.
If $\eps_3 > \eta\eps$, we set $h_4 = \rho/\log(\gamma/\eta\eps)$ and compute the DE formula \eqref{eq:ir_rme_de} once more.
When $\eps_1 \lesssim \eps_2$, it means that the initial mesh size is too large to assume the exponentially convergence of the DE formula \eqref{eq:ir_rme_de}.
For this case, we set $h_i \gets h_{i+1} ~ (i=1,2)$ and repeat the procedure.
The detail of the algorithm is given in Algorithm \ref{alg:2}.

\begin{algorithm}
  \caption{Automatic quadrature algorithm for $\rme^A$ based on the DE formula}
  \begin{algorithmic}[1]
    \Statex \textbf{Input} $A\in\bbC^{n\times n}$, tolerance for the error $\eps>0$, shift parameter $\sgm < 0$, initial mesh size $h_1 > 0$, safety parameter $\eta > 0$, the minimum mesh size $\hmin > 0$
    \Statex \textbf{Output} $X \approx \rme^A$
    \State Compute the rightmost eigenvalue of $A$, $\lmdr$
    \State $\tldA = A + (\sgm - \lmdr)I$
    \State $h_2 = h_1/2, \quad h_3 = h_1/4$
    \State $l_i, r_i = \texttt{GetInterval}(\sigma,\eps/2,h_i) \quad (i=1,2,3)$
    \Comment Set $\epsilon/2$ for the input of \texttt{GetInterval} to bound the truncation error by $\epsilon/2$
    \State $\tldX_i = h_i \sum_{k=l_i}^{r_i} F_{h_i}(kh_i,\tldA) \quad (i=1,2,3)$
    \State $\eps_i = \|\tldX_i - \tldX_3\|_2 \quad (i=1,2)$ \label{alg2:compute_lr}
    \State $\rho = h_1h_2\log(\eps_1/\eps_2)/(h_1-h_2), \quad \gamma = \eps_1\exp(\rho/h_1)$
    \State $\eps_3 = \gamma\exp(-\rho/h_3)$
    \If {$\eps_3 < \eta\eps$}
        \State \textbf{return} $X = \rme^{\lmdr - \sgm}\tldX_3$
    \Else
      \State $h_4 = \rho / \log(\gamma/\eta\eps)$
      \If {$\eps_1 \lesssim \eps_2$ or $h_4 < \hmin$}
        \State Set $h_i \gets h_{i+1}$ $(i=1,2,3)$ and $\tldX_i \gets \tldX_{i+1}$ $(i=1,2)$
        \State $l_3, r_3 = \texttt{GetInterval}(\sigma,\eps/2,h_3)$
        \State $\tldX_3 = h_3 \sum_{k=l_3}^{r_3} F_{h_3}(kh_3,\tldA)$
        \State go to Step \ref{alg2:compute_lr}.
      \Else
        \State $l_4, r_4 = \texttt{GetInterval}(\sigma,\eps/2,h_4)$
        \State $\tldX_4 = h_4 \sum_{k=l_4}^{r_4} F_{h_4}(kh_4,\tldA)$
        \State \textbf{return} $X = \rme^{\lmdr - \sgm}\tldX_4$
      \EndIf
    \EndIf
  \end{algorithmic}
  \label{alg:2}
\end{algorithm}

\section{Numerical examples} \label{sect:experiment}
The computation is carried out with Julia 1.9.1 on an Intel Core i5-9600K CPU with 32 GB RAM.
The IEEE double-precision arithmetic is used unless otherwise stated.
The reference solutions in Subsection \ref{sect:ex_comparison} are computed by $[13/13]$-type Pad\'e approximation with the \texttt{BigFloat} data type of Julia, which gives roughly 77 significant decimal digits.
The Pad\'e approximation is performed after scaling so that the 1 norm of the matrix is less than 5.37.
The programs for these experiments are available on GitHub\footnote{\url{https://github.com/f-ttok/article-expmde/}}.
For the DE formula, we use the change of variables in \eqref{eq:trans_99}, and the infinite sums in \texttt{GetInterval} are truncated with 50 terms.

\subsection{Computing the scalar exponential with the DE formula} \label{sect:ex_scalar}
When the numerical range of $A$ is known, the upper bound on the error of the DE formula can be obtained by $\|\rme^A - h\sum_{k=l}^r F_h (kh,A)\|_2 \le (1+\sqrt{2}) \max_{z \in \calW(A)} |\rme^z - h\sum_{k=l}^r F_h(kh,z)|$.
Hence, it would be worth to observe the error of the DE formula for the scalar exponential $|\rme^z - h\sum_{k=l}^r F_h(kh,z)|$.
Figure \ref{fig:err_de_scalar} visualizes the error, and we can see the followings:
\begin{itemize}
  \item When $z$ is on or near the negative real axis, the error of the DE formula with $h=0.1$ is about $10^{-16}$.
  \item When $-\Re(z)$ is large, the error of the DE formula with $h=0.2$ is about $10^{-16}$.
  \item When $z$ is in a sector region $\{z\in\bbC\colon |\arg(-z)| < \pi/4\}$, the error of the DE formula with $h=0.05$ is about $10^{-16}$.
\end{itemize}
From these findings, when $A$ is a near Hermitian matrix, the error in 2-norm of the DE formula for $\rme^A$ with $h = 0.1$ will be about $10^{-16}$.
\begin{figure}[tbp]
  \includegraphics[width=\linewidth]{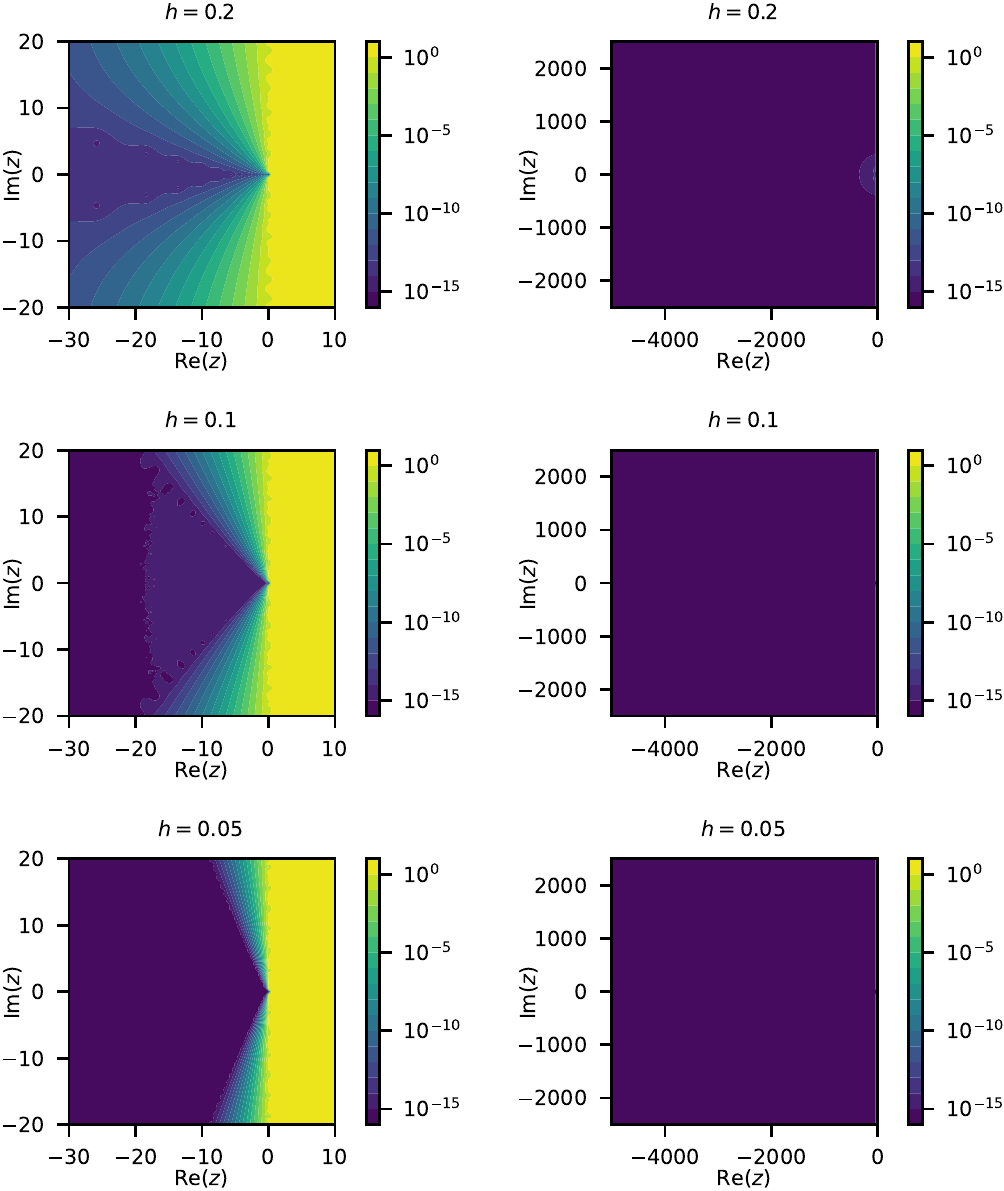}
  \caption{%
    The error of the double exponential formula for the scalar exponential $\rme^z$.
    The mesh size is set to $h=0.2, 0.1, 0.05$.
    The left column shows the error for the area $\{z\in\bbC\colon -30 \le \Re(z) \le 10,\ -20 \le \Im(z) \le 20\}$ and the right one shows the error for $\{z\in\bbC\colon -5000 \le \Re(z) \le 0,\ -2500 \le \Im(z) \le 2500\}$.
  }
  \label{fig:err_de_scalar}
\end{figure}

\subsection{Accuracy dependence on the shift parameter \texorpdfstring{$\sgm$}{}}\label{sect:ex_shift}
Algorithm \ref{alg:1} requires the parameter $\sigma$ for the input.
For reference, we see the accuracy dependence of the DE formula on $\sigma$.
We consider two test matrices $A_k = ZD_kZ^{-1}$ $(k=1,2)$, where $Z \in\bbC^{50\times 50}$ is generated by using a function \texttt{randsvd} in MatrixDepot.jl \cite{zhang2016matrix} so that $\kappa(Z) = 10^2$, and $D_k \in\bbC^{50\times 50}$ is a diagonal matrix whose diagonals are $d_i = 1 -10^{2k(i-1)/49} + \rmi \nu_i/20 \ (i=1,\dots,50, \ \nu_i \sim \calN(1,0))$.

Figure \ref{fig:ex_shift_err} shows the error of the DE formula for $\sigma \in [-10,5]$.
When $\sigma \ge 0$, the error of the DE formula is large for both matrices because $A$ does not satisfy the condition for \eqref{eq:ir_rme}.
Thus, $\sigma$ must be smaller than 0.
On the other hand, as $\sigma$ becomes small, the computational result becomes inaccurate.
Hence, it would be better to set $\sigma \in [-5,0)$.

\begin{figure}[htbp]
  \centering
  \includegraphics[scale=1.5]{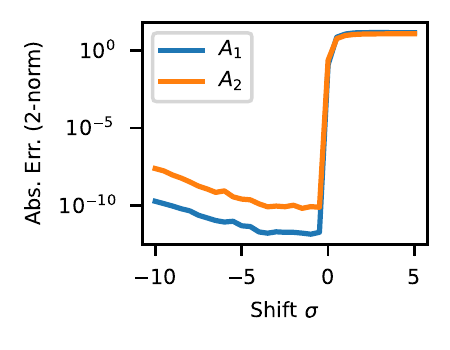}
  \caption{Accuracy dependence of the double exponential formula on the shift parameter $\sigma$.}
  \label{fig:ex_shift_err}
\end{figure}

\subsection{Accuracy of automatic quadrature}
Here, we see the accuracy of Algorithm \ref{alg:2}.
Figures in Figure \ref{fig:ex_automatic_quadrature} show the error of Algorithm \ref{alg:2}, where the test matrices are the same as in Subsection \ref{sect:ex_shift}, and the safe parameter is set to $\eta = 10$.
The left figure shows the accuracy dependence on the input tolerance $\eps$, and it shows that the accuracy of Algorithm \ref{alg:2} could be controlled by $\eps$.
The right figure shows the error estimates of the computational results for the selected mesh size for the case where $A = A_1$ and $\eps=10^{-10}$.
From the figure, it is found that the algorithm could estimate the error.

\begin{figure}[htbp]
  \centering
  \includegraphics{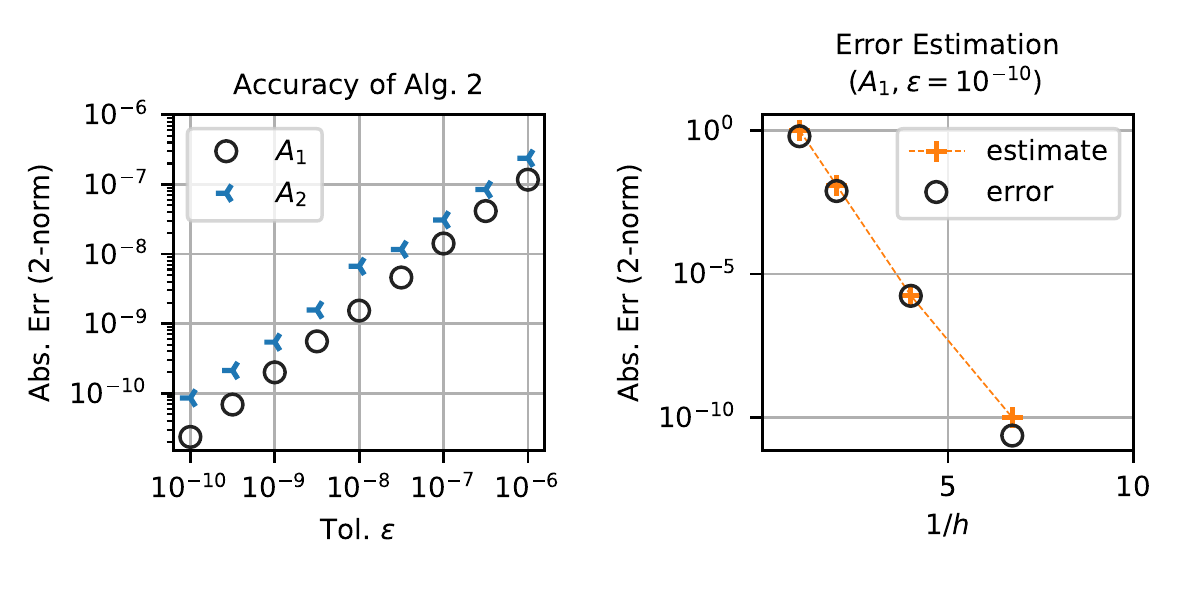}
  \caption{The accuracy of Algorithm \ref{alg:2}.
    The left figure shows the accuracy dependence on the input tolerance $\eps$ and the error of the computational results.
    The right figure shows the Error of the case when $A = A_1$ and $\eps=10^{-10}$.
    The horizontal axis is the inverse of the selected mesh size, and the vertical axis is the error.
    }
  \label{fig:ex_automatic_quadrature}
\end{figure}

\subsection{A comparison of the DE formula with the Cauchy integral for non-Hermitian matrices}\label{sect:ex_comparison}
Using non-Hermitian matrices, we compare Algorithm \ref{alg:1} (denoted by \texttt{DE}) with an algorithm based on the Cauchy integral.
The matrices are generated from the convection-diffusion problems
\begin{alignat}{2}
  & \frac{\partial u}{\partial t} = d \Delta u - \bm{c}^\top \nabla u
  \qquad && \text{in} \quad \Omega = (0,1)^2,\\
  & u = 0 && \text{on} \quad \partial \Omega,\\
  & u(0,\bmx) = u_0(\bmx),
\end{alignat}
where $d = 0.001, 0.01$, and $ \bm{c} = [0.2, 0.2], [0.4, 0.4]$.
By using the finite element method, we have a linear evolution equation $M\bmu'(t) = K\bmu(t)$ whose solution is $\exp(tM^{-1}K)\bmu(0)$.
We use FreeFEM++ \cite{hecht2012freefem} for the discretization, and the eigenvalues of $M^{-1}K$ is illustrated in Figure \ref{fig:ex_cvp_eigvals}.

For \texttt{DE}, we set $\sigma=-2.5$ and $\eps \approx 2.2\times 10^{-16}$.
For the Cauchy integral
\begin{align}
  \rme^A = \frac{1}{2\pi\rmi} \int_\Gamma \rme^z (zI + A)^{-1} \dd{z}
\end{align}
where $\Gamma$ is a contour enclosing the eigenvalue of $A$,
we used the algorithm (denoted by \texttt{Talbot}) proposed in \cite{dingfelder2015improved}.
\texttt{Talbot} employs the midpoint rule on the Talbot contour
\begin{align}
  \Gamma = \left\{m\Bigl(-\tilde{\sigma} + \tilde{\mu}\theta \cot(\tilde{\alpha}\theta) + \tilde{\nu}\rmi\theta \Bigr) \colon -\frac{\pi}{2} \le \theta \le -\frac{\pi}{2} \right \},
\end{align}
where $m$ is the number of abscissas, and $\tilde{\sigma},\tilde{\mu},\tilde{\alpha},\tilde{\nu}$ is selected to minimize the error of the midpoint rule.
Because \texttt{Talbot} focuses on Hermitian matrices, these numerical results would be out of the scope of the algorithm.

\begin{figure}[tbp]
  \centering
  \includegraphics{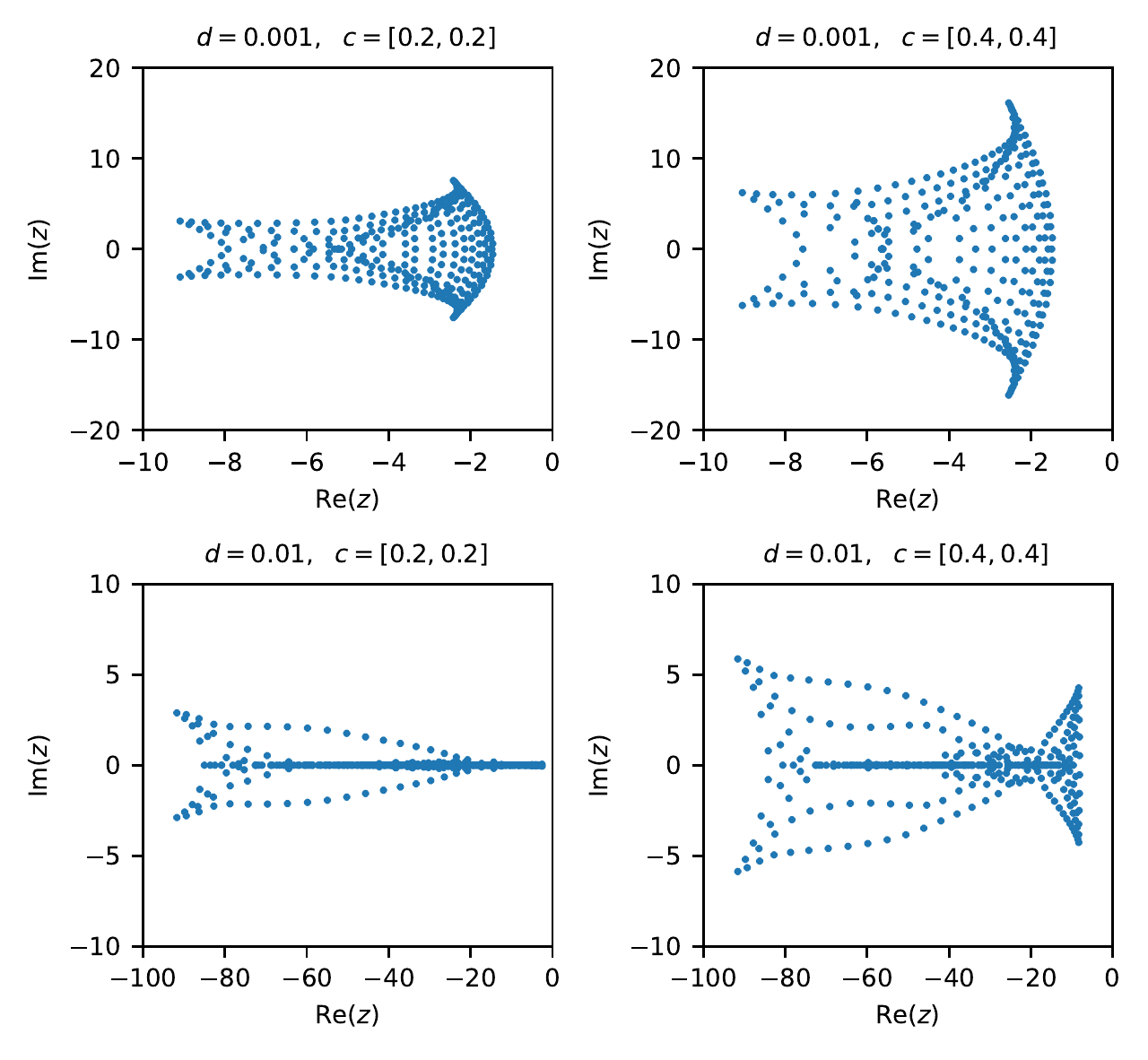}
  \caption{Eigenvalues of the test matrix $M^{-1}K$ in Subsection \ref{sect:ex_comparison}.}
  \label{fig:ex_cvp_eigvals}
\end{figure}

Figure \ref{fig:ex_cvp_histories} illustrates the convergence histories of these algorithms for $t=1$.
For $d=0.01$, both \texttt{DE} and \texttt{Talbot} give accurate results, and for $d=0.001$, \texttt{DE} still gives accurate results while \texttt{Talbot} does not converge.
Hence our algorithm would be a choice of quadrature-based algorithms for non-Hermitian matrices.
\begin{figure}[tbp]
  \centering
  \includegraphics{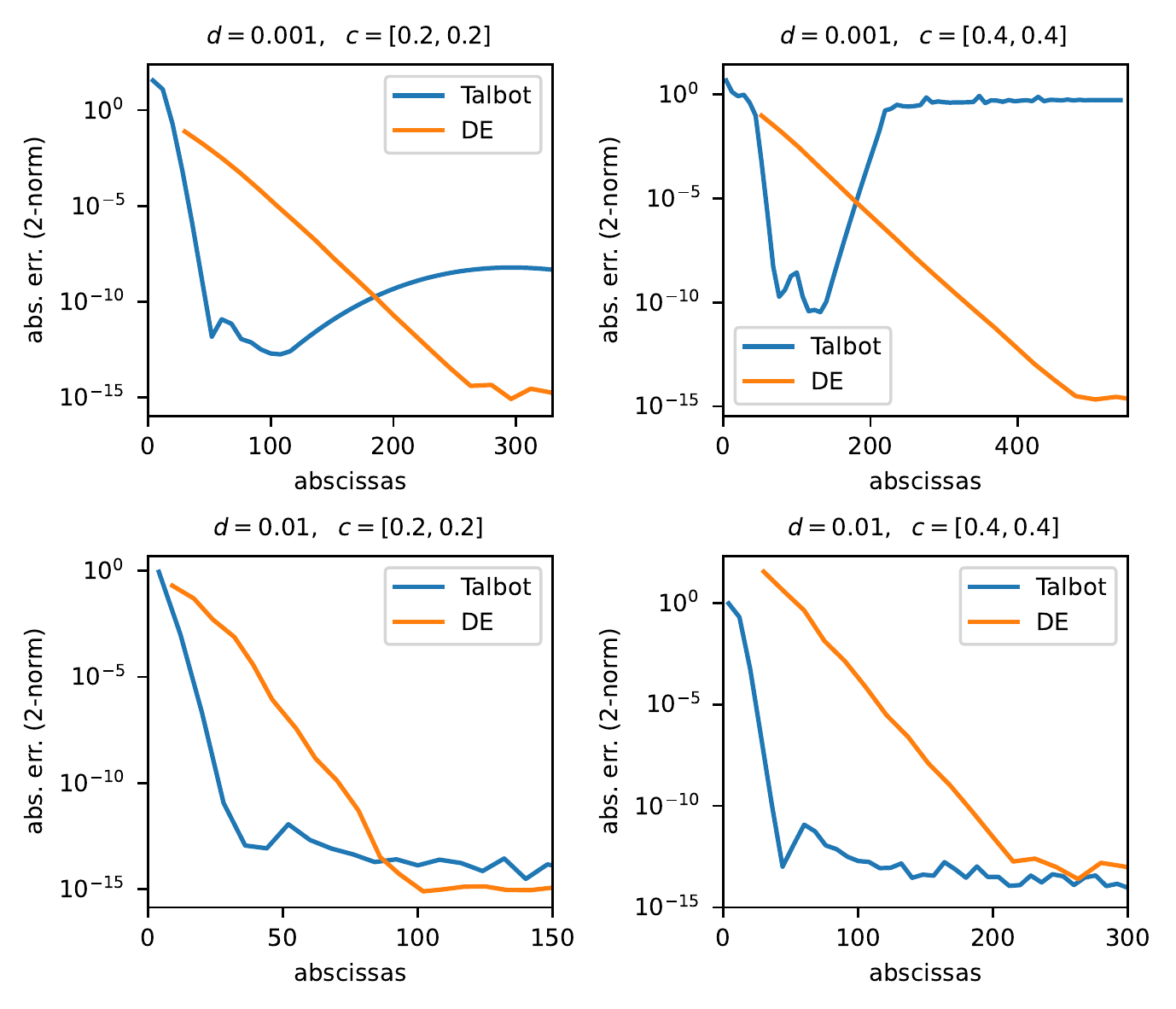}
  \caption{Convergence histories of \texttt{DE} and \texttt{Talbot} for the problem $\exp(M^{-1}K)$ in Subsection \ref{sect:ex_comparison}.}
  \label{fig:ex_cvp_histories}
\end{figure}

\section{Conclusion} \label{sect:conclusion}
In this paper, the DE formula was considered for the computation of $\rme^A$.
To utilize the DE formula, we analyzed the truncation error and proposed algorithms.
Numerical results showed the validity of the algorithm.

Future work includes improvement of the change of variables of the DE formula for $\rme^A$ and comprehensive performance evaluations of the proposed algorithms on parallel computers for practical problems.

\section*{Acknowledgments}

This work was supported by JSPS KAKENHI Grant Number 20H00581.
The authors would like to thank Enago (www.enago.jp) for English language editing.

\appendix

\section{Derivative of the change of variables} \label{sect:derivative_cv}
Let $v(t) = -2t - \alpha(1-\rme^{-t}) - \beta(\rme^t-1)$.
Then, the transformation \eqref{eq:trans_99} for the DE formula is $x_h(t) = \pi t/h (1 - \exp(v(t)))$ and its derivative is
\begin{align}
  x_h'(t) = \frac{\pi}{h} \frac{1 - \rme^{v(t)} + tv'(t)\rme^{v(t)}}{(\rme^{v(t)} - 1)^2}.
\end{align}
Because $\exp(v(t)) \to 1$ as $t\to 0$, we would use $x_h(0) = \pi / h(\alpha+\beta+2)$ and
\begin{align}
  x_h'(0) = \frac{\pi}{2h} \frac{
    \alpha^2 + 2\alpha\beta + 5\alpha + \beta^2 + 3\beta + 4
  }{
    \alpha^2 + 2\alpha\beta + 4\alpha + \beta^2 + 4\beta + 4
  }.
\end{align}


\end{document}